\address{Department of Mathematics, Massachusetts Institute of Technology, Cambridge, MA 02139, USA}
\email{ryba@mit.edu}
\title{An Answer to a Question of Zeilberger and Zeilberger about Fractional Counting of Partitions}
\author{Christopher Ryba}
\date{\today}
\begin{document}
\maketitle

\newtheorem{theorem}{Theorem}[section]
\newtheorem{lemma}[theorem]{Lemma}
\newtheorem{proposition}[theorem]{Proposition}
\newtheorem{corollary}[theorem]{Corollary}
\newtheorem{definition}[theorem]{Definition}
\newtheorem{remark}[theorem]{Remark}
\newtheorem{example}[theorem]{Example}

\newcommand\numberthis{\addtocounter{equation}{1}\tag{\theequation}}

\begin{abstract}
We answer a question of Zeilberger and Zeilberger about certain partition statistics.
\end{abstract}

\section{Introduction}
\noindent
For a partition $\lambda = (\lambda_1, \lambda_2, \ldots, \lambda_l)$, define $w_\lambda = \lambda_1 \lambda_2 \cdots \lambda_l$ (this is the product of the parts of $\lambda$). Zeilberger and Zeilberger \cite{ZZ} define two quantities:
\[
b(n) = \sum_{\lambda \vdash n} \frac{1}{w_\lambda}.
\]
and
\[
b(n, k) = \sum_{\substack{\lambda \vdash n \\ \lambda_1 = k}} \frac{1}{w_\lambda}.
\]
The latter sum is over partitions of $n$ whose largest part is equal to $k$, so $b(n) = \sum_{i=1}^n b(n,k)$. They ask to determine
\[
f(x) = \lim_{n \to \infty} b(n, \lfloor xn \rfloor)
\]
as a function on $[0,1]$. To answer this question, we use two tools. Firstly, a recurrence for $b(n,k)$ given by
Zeilberger and Zeilberger \cite{ZZ}:
\[
b(n,k) = \frac{1}{k} \sum_{i=1}^k b(n-k, i).
\]
Secondly, we use the asymptotic behaviour of $b(n)$, first considered by Lehmer \cite{Lehmer}.
\begin{theorem}[Lehmer]
We have $b(n) = e^{-\gamma}n(1 + o(1))$ as $n \to \infty$, where $\gamma$ is Euler's gamma.
\end{theorem}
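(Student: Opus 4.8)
The plan is to route everything through the generating function $F(x) = \sum_{n \ge 0} b(n)\,x^n$. First I would recognize a partition as a choice of a multiplicity $m_k \ge 0$ for each part size $k$, under which $|\lambda| = \sum_k k m_k$ and $w_\lambda = \prod_k k^{m_k}$. The reciprocal weight then factors across part sizes, and summing the resulting geometric series gives the Euler product
\[
F(x) = \prod_{k \ge 1} \sum_{m \ge 0} \frac{x^{km}}{k^{m}} = \prod_{k \ge 1} \frac{1}{1 - x^k/k},
\]
valid for $|x| < 1$. Thus the target $b(n) \sim e^{-\gamma} n$ becomes equivalent to the Abelian-type statement $F(x) \sim e^{-\gamma}(1-x)^{-2}$ as $x \to 1^-$, together with a Tauberian passage back to the coefficients.

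Next I would extract the behaviour of $F$ as $x \to 1^-$. The factor indexed by $k$ has poles at $x = k^{1/k}\zeta$ for $k$-th roots of unity $\zeta$; for $k \ge 2$ these lie strictly outside the unit disc, but since $k^{1/k} \to 1$ they accumulate on the whole unit circle, with only the $k=1$ factor contributing a genuine pole $(1-x)^{-1}$ on the boundary. To see that the accumulation upgrades this to a double pole with the correct constant, I would take logarithms, peel off the linear terms $x^k/k$ whose sum is $-\log(1-x)$, and write
\[
\log\!\big[(1-x)^2 F(x)\big] = -x + \sum_{k \ge 2} R_k(x), \qquad R_k(x) := -\log\!\Big(1 - \tfrac{x^k}{k}\Big) - \frac{x^k}{k} \ge 0.
\]
Because each $R_k$ is nonnegative and increasing on $(0,1)$ with $\sum_{k \ge 2} R_k(1) < \infty$, monotone convergence lets me send $x \to 1^-$ termwise, giving the limit $-1 + \sum_{k \ge 2}\big(\log\tfrac{k}{k-1} - \tfrac1k\big) = -1 + (1-\gamma) = -\gamma$, where Euler's constant enters precisely through $H_N - \log N \to \gamma$. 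Hence $(1-x)^2 F(x) \to e^{-\gamma}$.

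Finally I would convert this into coefficient asymptotics. Since $b(n) \ge 0$, Karamata's Tauberian theorem immediately yields the summed form $\sum_{m \le n} b(m) \sim \tfrac12 e^{-\gamma} n^2$. The accumulation of poles makes the unit circle a natural boundary for $F$, so the Flajolet--Odlyzko transfer theorems are unavailable and I cannot read off the pointwise asymptotic directly; this is the main obstacle. To circumvent it I would supply a Tauberian side condition, namely monotonicity $b(n) \le b(n+1)$, which follows from the weight-preserving injection $\lambda \mapsto \lambda \cup \{1\}$ of partitions of $n$ into partitions of $n+1$ (adjoining a part equal to $1$ leaves $w_\lambda$ unchanged, so the image, the set of partitions of $n+1$ with a part equal to $1$, already accounts for a total mass of $b(n)$). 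A routine windowing argument --- comparing $b(n)$ against the averages $\tfrac{1}{\delta n}\big(\sum_{m \le n} b(m) - \sum_{m \le (1-\delta)n} b(m)\big)$ and their upper counterparts, then letting $\delta \to 0$ --- turns the quadratic partial-sum estimate together with monotonicity into the desired $b(n) \sim e^{-\gamma} n$. The delicate point throughout is the second step: correctly distilling the double pole and the constant $e^{-\gamma}$ from the infinite product in the presence of the accumulating singularities.
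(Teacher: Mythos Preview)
The paper does not actually prove this theorem; it is quoted as a result of Lehmer with a bare citation to his 1972 paper, so there is no in-paper argument to compare against.

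Your proposal is correct and self-contained. The product formula $F(x)=\prod_{k\ge 1}(1-x^k/k)^{-1}$ is standard, and your rearrangement $\log\big[(1-x)^2 F(x)\big]=-x+\sum_{k\ge 2}R_k(x)$ checks out: the $k=1$ factor contributes $-\log(1-x)$, while for $k\ge 2$ writing $-\log(1-x^k/k)=x^k/k+R_k(x)$ and using $\sum_{k\ge 2}x^k/k=-\log(1-x)-x$ gives exactly the displayed identity. Monotone convergence is legitimate since each $R_k$ is nonnegative and increasing on $(0,1)$ with $R_k(1)=O(k^{-2})$, and the evaluation $\sum_{k=2}^N\big(\log\tfrac{k}{k-1}-\tfrac1k\big)=\log N-(H_N-1)\to 1-\gamma$ is correct, yielding $(1-x)^2F(x)\to e^{-\gamma}$. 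The Tauberian half is also handled properly: Karamata gives $\sum_{m\le n}b(m)\sim \tfrac12 e^{-\gamma}n^2$, the injection $\lambda\mapsto\lambda\cup\{1\}$ genuinely proves $b(n)\le b(n+1)$ since adjoining a part $1$ leaves $w_\lambda$ unchanged, and the windowing argument then squeezes $b(n)/n$ between $e^{-\gamma}(1-\delta/2)$ and $e^{-\gamma}(1+\delta/2)$ up to $o(1)$. This line of attack is in the same spirit as Lehmer's original generating-function analysis near $x=1$.
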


\subsection{Acknowledgements}
The author would like to thank Andrew Ahn and Pavel Etingof for useful conversations.

\section{Understanding $b(n,k)$}
\noindent
In this section $x$ will be a number in $[0,1]$.
\begin{definition}
Let
\[
c(n,k) = e^{\gamma} b(n, k)
\]
and
\[
c(n) = e^{\gamma} b(n).
\]
\end{definition}
\noindent
Using this new function will make the following calculations cleaner. For example, $\lim_{n \to \infty} c(n)/n = 1$ according to our new convention. Note that $c(n,k)$ satisfies the same recurrence identities as $b(n, k)$.
\begin{example}
Suppose that $x \in (1/2, 1]$. Then for $n$ sufficiently large, we have
\[
c(n, \lfloor xn \rfloor) = \frac{1}{\lfloor xn \rfloor} \sum_{i=1}^{\lfloor xn \rfloor} c(n - \lfloor xn \rfloor, i) = \frac{c(n - \lfloor xn \rfloor)}{\lfloor xn \rfloor},
\]
because $\lfloor xn \rfloor \geq n - \lfloor xn \rfloor$ for $n$ sufficiently large. By Theorem 1.1, we may take the limit as $n \to \infty$, and obtain $\frac{1-x}{x}$.
\end{example}
\begin{proposition}
For $r \in \mathbb{Z}_{>0}$, there exists a smooth function $F_r(t)$ such that for $x \in (\frac{1}{r+1}, \frac{1}{r}]$,
\[
c(n, \lfloor xn \rfloor ) = F_r(x) + o(1)
\]
as $n \to \infty$. Moreover, these $F_r(x)$ are related via
\[
F_r(x) = \frac{1-x}{x} - \frac{1-x}{x}\left( \int_{\frac{x}{1-x}}^{\frac{1}{r-1}} F_{r-1}(t) dt + \sum_{s=1}^{r-2} \int_{\frac{1}{s+1}}^{\frac{1}{s}} F_s(t) dt \right).
\]
\end{proposition}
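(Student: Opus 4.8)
The plan is to argue by induction on $r$, with the Example furnishing the base case $r = 1$ (so $F_1(x) = \frac{1-x}{x}$). Throughout I would abbreviate $m = m(n) = n - \lfloor xn \rfloor$, so that the recurrence for $c$ reads
\[
c(n, \lfloor xn \rfloor) = \frac{1}{\lfloor xn \rfloor} \sum_{i=1}^{\lfloor xn \rfloor} c(m, i),
\]
and I would record the elementary facts that for $x \in (\frac{1}{r+1}, \frac{1}{r}]$ with $r \geq 2$ one has $\lfloor xn \rfloor < m$ for $n$ large, while $\frac{\lfloor xn \rfloor}{m} \to \frac{x}{1-x} \in (\frac{1}{r}, \frac{1}{r-1}]$.

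The central device is a complement trick. Summing $c(m,i)$ directly over the short range $i \leq \lfloor xn \rfloor$ is awkward, since the rescaled indices $i/m$ accumulate at $0$ and would force me to invoke $F_s$ for all $s$. Instead I would write
\[
\sum_{i=1}^{\lfloor xn\rfloor} c(m,i) = c(m) - \sum_{i = \lfloor xn\rfloor + 1}^{m} c(m, i),
\]
where $c(m) = m(1 + o(1))$ by the definition of $c$ together with Lehmer's theorem. In the remaining tail, the rescaled indices $i/m$ lie in $(\frac{x}{1-x}, 1]$, a region meeting only the finitely many intervals on which $F_1, \dots, F_{r-1}$ are already defined by the inductive hypothesis.

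Partitioning the tail according to which interval $i/m$ falls into and interpreting each piece as a Riemann sum, I expect to obtain
\[
\frac{1}{m}\sum_{i=\lfloor xn\rfloor+1}^{m} c(m,i) \to \int_{\frac{x}{1-x}}^{\frac{1}{r-1}} F_{r-1}(t)\, dt + \sum_{s=1}^{r-2} \int_{\frac{1}{s+1}}^{\frac{1}{s}} F_s(t)\, dt.
\]
Combining this with $c(m)/m \to 1$ and $\frac{m}{\lfloor xn\rfloor} \to \frac{1-x}{x}$ gives precisely the asserted identity, which I would then adopt as the \emph{definition} of $F_r$. Smoothness of $F_r$ on $(\frac{1}{r+1}, \frac{1}{r}]$ is then immediate: the prefactor $\frac{1-x}{x}$ is smooth there, the sum $\sum_{s=1}^{r-2} \int \cdots$ is a constant, and the term $\int_{x/(1-x)}^{1/(r-1)} F_{r-1}(t)\, dt$ is smooth in $x$ by the fundamental theorem of calculus and the chain rule, using that $x \mapsto \frac{x}{1-x}$ is smooth and that $F_{r-1}$ is smooth by induction.

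The main obstacle is the passage from the tail sum to the integral. The inductive hypothesis as literally stated only asserts $c(m, \lfloor ym\rfloor) = F_s(y) + o(1)$ for each fixed $y$, which does not by itself control a Riemann sum whose index window grows with $m$. I would therefore strengthen the statement carried through the induction to assert convergence $c(m,i) - F_s(i/m) \to 0$ that is \emph{uniform} for $i/m$ in compact subintervals of each $(\frac{1}{s+1}, \frac{1}{s}]$, and check that the inductive step reproduces convergence of this same uniform type; the smoothness, and hence boundedness and equicontinuity, of the $F_s$ is exactly what lets the uniformity propagate (and also forces the base case to hold uniformly). A minor point to dispatch is the handful of indices $i$ with $i/m$ near the finitely many interval endpoints inside $(\frac{x}{1-x}, 1]$: since the $F_s$ are bounded, these contribute only $o(1)$ after division by $m$ and are absorbed into the error.
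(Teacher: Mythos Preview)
Your approach matches the paper's almost exactly: induction on $r$ with the $r=1$ example as base case, the same complement trick to replace the short sum by $c(m)$ minus the tail, the same observation that in the tail $i/m \geq 1/r$ so only $F_1,\dots,F_{r-1}$ are needed, and the same interpretation of the partitioned tail as a Riemann sum. The one substantive difference is that the paper simply writes each tail term as $F_s(i/m)+o(1)$ and then remarks that the $\mathcal{O}(n)$ accumulated $o(1)$ errors are cancelled by the prefactor $1/\lfloor xn\rfloor$, whereas you correctly flag that pointwise $o(1)$ does not automatically sum to $o(n)$ and propose to carry a uniform-in-$y$ version of the convergence through the induction. That strengthening is the right way to make the Riemann-sum step rigorous, and your smoothness argument is also more explicit than the paper's; otherwise the two proofs are the same.
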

\begin{proof}
Example 2.2 demonstrated this for $x \in (1/2, 1]$, where we obtained $F_1(x) = \frac{1-x}{x}$; this forms the base case of an induction on $r$. We now assume $x \in (\frac{1}{r+1}, \frac{1}{r}]$;
\[
c(n, \lfloor xn \rfloor ) = \frac{1}{\lfloor xn \rfloor} \sum_{i=1}^{\lfloor xn \rfloor} c(n - \lfloor xn \rfloor , i) = \frac{1}{\lfloor xn \rfloor}\left(c(n - \lfloor xn \rfloor) - \sum_{i=\lfloor xn \rfloor +1}^{n - \lfloor xn \rfloor} c(n - \lfloor xn \rfloor, i)\right).
\]
In the latter sum, the ratio $\frac{i}{n - \lfloor xn \rfloor}$ is minimised when $i = \lfloor xn \rfloor +1$, and the resulting quantity is a weakly decreasing function of $x$. Because $x > \frac{1}{r+1}$, we conclude
\[
\frac{i}{n- \lfloor xn \rfloor} \geq \frac{\lfloor \frac{n}{r+1} \rfloor + 1}{n - \lfloor \frac{n}{r+1} \rfloor} \geq 1/r.
\]
We may therefore apply the induction hypothesis to the terms in the sum.
\begin{eqnarray*}
c(n, \lfloor xn \rfloor) &=&  \frac{1}{\lfloor xn \rfloor}\left(c(n - \lfloor xn \rfloor) - \left(\sum_{i= \lfloor xn \rfloor+1}^{\lfloor \frac{n - \lfloor xn \rfloor}{r-1}\rfloor} F_{r-1}\left(\frac{i}{n - \lfloor xn \rfloor}\right) +o(1) \right. \right.\\
&=&\left. \left. \hspace{10mm} +  \sum_{s = 1}^{r-2} \sum_{i= \lfloor \frac{n - \lfloor xn \rfloor}{(s+1)}\rfloor+1}^{ \lfloor \frac{n - \lfloor xn \rfloor}{s}\rfloor} F_s\left(\frac{i}{n - \lfloor xn \rfloor}\right) + o(1) \right)\right)
\end{eqnarray*}
Each term is a Riemann sum converging to an integral of the corresponding $F_s$. We note that although each $o(1)$ error term is summed $\mathcal{O}(n)$ times, this is accounted for by the leading factor of $1/\lfloor xn \rfloor$, so these still vanish in the limit $n \to \infty$. Note that we have
\begin{eqnarray*}
\lim_{n \to \infty} \frac{1}{n} \sum_{i= \lfloor \frac{n - \lfloor xn \rfloor}{(s+1)}\rfloor+1}^{\lfloor \frac{n - \lfloor xn \rfloor}{s}\rfloor} F_s\left(\frac{i}{n - \lfloor xn \rfloor}\right)
&=&
\int_{\frac{1-x}{s+1}}^{\frac{1-x}{s}} F_s\left(\frac{t}{1-x}\right) dt 
=
(1-x)\int_{\frac{1}{s+1}}^{\frac{1}{s}} F_s(t) dt.
\end{eqnarray*}
We conclude that
\[
\lim_{n \to \infty} c(n, \lfloor xn \rfloor) = \frac{1-x}{x} - \frac{1-x}{x}\left( \int_{\frac{x}{1-x}}^{\frac{1}{r-1}} F_{r-1}(t) dt + \sum_{s=1}^{r-2} \int_{\frac{1}{s+1}}^{\frac{1}{s}} F_s(t) dt \right).
\]
For $x \in (\frac{1}{r+1}, \frac{1}{r}]$, it is this quantity which we define to be $F_{r}(x)$, and the above limit is exactly the statement of the proposition. We conclude that $\lim_{n \to \infty} c(n, \lfloor nx \rfloor)$ is smooth for $x \notin \{1/n \mid n \in \mathbb{Z}_{>0}\}$. 
\end{proof}
\begin{example}
We may compute
\[
F_2(x) =  \frac{1-x}{x} - \frac{1-x}{x}\left( \int_{\frac{x}{1-x}}^{1} \frac{1-t}{t} dt \right) = \frac{2-3x}{x} - \frac{1-x}{x} \log\left( \frac{1-x}{x} \right).
\]
\end{example}
\begin{remark}
We may differentiate the expression for $F_r(x)$ to obtain a differential equation satisfied by $F_r(x)$:
\[
\frac{d}{dx} \left( \frac{x}{1-x} F_{r}(x) \right)  = \frac{1}{(1-x)^2}F_{r-1}\left( \frac{x}{1-x} \right)
\]
\end{remark}
\noindent
Finally, we obtain our result.
\begin{corollary}
Because $c(n,k)$ and $b(n,k)$ differed only by rescaling, and the above relations are linear in the $F_r$, we have
\[
\lim_{n \to \infty} b(n, \lfloor xn \rfloor) = e^{-\gamma}F_r(x)
\]
whenever $ x \in (\frac{1}{r+1}, \frac{1}{r}]$.
\end{corollary}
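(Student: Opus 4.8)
The plan is to deduce this directly from Proposition 2.3 together with the defining relation of Definition 2.1, since the corollary is really just a rescaling of content already established. First I would recall that Definition 2.1 sets $c(n,k) = e^{\gamma} b(n,k)$, so that $b(n,k) = e^{-\gamma} c(n,k)$ for all $n$ and $k$; in particular this holds with $k = \lfloor xn \rfloor$. Fixing $x \in (\frac{1}{r+1}, \frac{1}{r}]$, Proposition 2.3 asserts that $c(n, \lfloor xn \rfloor) = F_r(x) + o(1)$ as $n \to \infty$, which is to say $\lim_{n \to \infty} c(n, \lfloor xn \rfloor) = F_r(x)$.

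The second step is to transfer this limit across the scalar $e^{-\gamma}$. Since $e^{-\gamma}$ is a fixed constant independent of $n$, multiplying the convergent sequence $c(n, \lfloor xn \rfloor)$ by it preserves convergence and commutes with the limit, yielding
\[
\lim_{n \to \infty} b(n, \lfloor xn \rfloor) = e^{-\gamma} \lim_{n \to \infty} c(n, \lfloor xn \rfloor) = e^{-\gamma} F_r(x),
\]
which is exactly the claimed identity on $(\frac{1}{r+1}, \frac{1}{r}]$.

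I do not expect any genuine obstacle here: all the analytic work --- the induction on $r$, the control of the $o(1)$ error terms (each summed $\mathcal{O}(n)$ times but tamed by the $1/\lfloor xn \rfloor$ prefactor), and the Riemann-sum passage to the integrals defining $F_r$ --- has already been carried out in the proof of Proposition 2.3. The only point worth noting is that the intervals $(\frac{1}{r+1}, \frac{1}{r}]$ for $r \in \mathbb{Z}_{>0}$ partition $(0,1]$, so that the formula $e^{-\gamma} F_r(x)$ determines $\lim_{n \to \infty} b(n, \lfloor xn \rfloor)$ for every $x \in (0,1]$; this is immediate. Hence the corollary follows by a single line of algebra from the proposition.
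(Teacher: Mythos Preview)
Your proposal is correct and matches the paper's approach: the paper treats the corollary as immediate from Definition~2.1 and Proposition~2.3 (the justification is embedded in the statement itself, with no separate proof given), and you have simply spelled out that one-line deduction $b(n,\lfloor xn\rfloor)=e^{-\gamma}c(n,\lfloor xn\rfloor)\to e^{-\gamma}F_r(x)$.
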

\begin{remark}
Suppose we assemble all the functions $F_r(x)$ into a single function $F(x)$ on $(0,1]$ (and say $F(x)=0$ for $x>1$). Let $G(x) = F(1/x)$. Then, the differential equation becomes
\[
G(x) - (x-1)G^\prime(x) = G(x-1).
\]
The upshot of this is that the current equation is well adapted for a Laplace transform. Writing $\hat{G}(t)$ for the Laplace transform of $G(x)$, we obtain:
\[
\hat{G}(t)+ (t \hat{G}(t) - G(0)) + \frac{d}{dt}(t\hat{G}(t) - G(0)) = e^{-t}\hat{G}(t),
\]
using the boundary condition $G(0) = 0$, this becomes
\[
\frac{d}{dt} \hat{G}(t) = \frac{e^{-t}-t-2}{t} \hat{G}(t).
\]
We may solve this explicitly:
\[
\hat{G}(t) = K t^{-2}\exp(Ei(-t) - t),
\]
where $Ei$ is the exponential integral, and $K$ is a constant.
\end{remark}

\bibliographystyle{alpha}
\bibliography{mybib.bib}

\end{document}